\let\origsection=\section 
\def\section{\@ifstar{\origsection*}{\mysection}}
\def\mysection{\@startsection{section}{1}\z@{.7\linespacing\@plus\linespacing}{.5\linespacing}{\normalfont\scshape\centering\S}}
\renewcommand{\PrintDOI}[1]{\doi{#1}}
\let\polishlcross=\l
\def\l{\ifmmode\ell\else\polishlcross\fi}
\renewcommand{\backslash}{\smallsetminus}
\def\moverlay{\mathpalette\mov@rlay}
\def\mov@rlay#1#2{\leavevmode\vtop{%
   \baselineskip\z@skip \lineskiplimit-\maxdimen
   \ialign{\hfil$\m@th#1##$\hfil\cr#2\crcr}}}
\newcommand{\charfusion}[3][\mathord]{
    #1{\ifx#1\mathop\vphantom{#2}\fi
        \mathpalette\mov@rlay{#2\cr#3}
      }
    \ifx#1\mathop\expandafter\displaylimits\fi}
\newtheoremstyle{case}{}{}{\normalfont}{}{\itshape}{:}{ }{}
\newtheorem{thm}[section]{Theorem}
\newtheorem{ques}[equation]{Question}
\theoremstyle{definition}
\newtheoremstyle{case}{}{}{\normalfont}{}{\itshape}{\normalfont:}{ }{}
\theoremstyle{case}
\numberwithin{equation}{section}
\let\epsilon\varepsilon
\let\subset\subseteq
\def\({\left(}
\def\){\right)}
\def\[{\left[}
\def\]{\right]}
\newcommand*\patchAmsMathEnvironmentForLineno[1]{%
\expandafter\let\csname old#1\expandafter\endcsname\csname #1\endcsname
\expandafter\let\csname oldend#1\expandafter\endcsname\csname end#1\endcsname
\renewenvironment{#1}%
{\linenomath\csname old#1\endcsname}%
{\csname oldend#1\endcsname\endlinenomath}}%
\newcommand*\patchBothAmsMathEnvironmentsForLineno[1]{%
\patchAmsMathEnvironmentForLineno{#1}%
\patchAmsMathEnvironmentForLineno{#1*}}%
\begin{document}

\title{Ramsey equivalence of $K_n$ and $K_n+K_{n-1}$\\ for multiple colours}
\author{
Damian Reding
}

\shortdate
\yyyymmdddate
\settimeformat{ampmtime}
\date{\today, \currenttime}

\address{Technische Universit\"at Hamburg, Institut f\"ur Mathematik, Hamburg, Germany}
\email{damian.reding@tuhh.de}

\maketitle

\begin{abstract}
In 2015 Bloom and Liebenau proved that $K_n$ and $K_n+K_{n-1}$ possess the same $2$-Ramsey graphs for all $n\geq 3$ (with a single exception for $n=3$). In the following we give a simple proof that $K_n$ and $K_n+K_{n-1}$ possess the same $r$-Ramsey graphs for all $n, r\geq 3$.

\end{abstract}

\vspace{0.5cm}

Given an integer $r\geq 2$, an \emph{$r$-Ramsey graph $G$ for $H$} is such that every edge-colouring of $G$ with $r$ colours admits a monochromatic copy of $H$. Graphs $G_1$ and $G_2$ are \emph{$r$-Ramsey equivalent} if every $r$-Ramsey graph for $G_1$ is an $r$-Ramsey graph for $G_2$, and vice versa. For $r=2$ the concept was introduced in~\cite{SZZ}, where it was proved that $K_n$ and $K_n+K_{n-2}$ are $2$-Ramsey equivalent. In~\cite{BL} this was later improved to $K_n$ and $K_n+K_{n-1}$ for $n\geq 4$ and also (re)proved that $K_6$ is the only obstacle in the case $n=3$. The following extends these results to any number $r\geq 3$ of colours, for which it guarantees the non-existence of any obstacles.

\begin{thm}
$K_n$ and $K_n+K_{n-1}$ are $r$-Ramsey equivalent for all $n, r\geq 3$.
\end{thm}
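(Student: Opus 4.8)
The statement splits into two implications, one of which is free: since $K_{n-1}\subseteq K_n$, every monochromatic copy of $K_n+K_{n-1}$ contains a monochromatic $K_n$, so any $r$-Ramsey graph for $K_n+K_{n-1}$ is automatically one for $K_n$. The content is thus the converse, that $G\to(K_n)_r$ implies $G\to(K_n+K_{n-1})_r$, and I would prove it by a recolouring argument run by contradiction. Suppose $c$ is an $r$-colouring of $G$ with no monochromatic $K_n+K_{n-1}$. Pick a monochromatic $K_n$ of $c$, say red on a set $A$ (one exists since $G\to(K_n)_r$). The hypothesis on $c$ immediately forbids a red $K_{n-1}$ inside $G-A$, and more generally forces, for each colour $i$, that every monochromatic $K_{n-1}$ in colour $i$ meets every monochromatic $K_n$ in colour $i$.

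Next I would iterate this observation to set the stage: peel off pairwise-disjoint monochromatic $K_n$'s $A_1,\dots,A_t$, one in each of $t$ distinct colours, stopping as soon as the remaining graph $G':=G-(A_1\cup\cdots\cup A_t)$ carries (under $c$) no monochromatic $K_n$ at all. The point is that each freshly peeled $A_j$ must use a colour not yet seen — a repeat of colour $i$ would produce a monochromatic $K_n$ disjoint from $A_i$, hence a monochromatic $K_n+K_{n-1}$ in $c$ — so $t\le r$; moreover $G'$ carries no $K_{n-1}$ in the colour $\gamma$ of $A_1$, since such a $K_{n-1}$ lies in $G-A_1$ and so would combine with $A_1$ into a monochromatic $K_n+K_{n-1}$.

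The contradiction then comes from $G\to(K_n)_r$: I would build an $r$-colouring of $G$ with no monochromatic $K_n$. Keep $c$ on $G'$; colour the edges inside each $A_j$ using the $r-1\ge2$ colours other than $\gamma$ so that no $A_j$ spans a monochromatic $K_{n-1}$ — possible since $K_n\not\to(K_{n-1})_2$ for $n\ge4$ — colour every edge from $A_1\cup\cdots\cup A_t$ into $G'$ with $\gamma$, and colour the edges between distinct $A_j$'s with a non-$\gamma$ colour. A monochromatic $K_n$ of the result cannot lie in $G'$ (no monochromatic $K_n$ there) and cannot meet some $A_j$ in $\ge n-1$ vertices (it would contain a non-monochromatic $K_{n-1}$ inside $A_j$); in the remaining cases it must contain two vertices joined by a non-$\gamma$ edge, or else reduce to a single $A_j$-vertex together with a $\gamma$-monochromatic $K_{n-1}$ sitting in $G'$ — which does not exist. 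So the colouring is monochromatic-$K_n$-free, a contradiction.

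The main obstacle, and exactly where $r\ge3$ (rather than $r\ge2$) is used, is this last recolouring: one needs a spare colour $\gamma$ to absorb the edges leaving the peeled cliques while still keeping $\ge2$ colours available inside each $A_j$ to kill its monochromatic $K_{n-1}$'s — with only two colours there is no room, which is precisely why $K_6$ is an obstruction when $n=r=3$. This also shows that $n=3$ needs a separate argument, since there $K_{n-1}=K_2$ is unavoidably monochromatic and "no monochromatic $K_{n-1}$ inside $A_j$" is impossible; for $n=3$ I would instead recolour the edges at the peeled triangles so that no monochromatic triangle meets them, again exploiting $r\ge3$ for the extra colours. A small-parameter verification (ensuring the subgraph spanned by the peeled cliques is itself not $(r-1)$-Ramsey for $K_n$, which can fail only in a bounded range of $(n,r)$) would need to be done by hand.
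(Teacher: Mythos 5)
Your overall strategy — fix a colouring with no monochromatic $K_n+K_{n-1}$, extract cliques, and recolour to contradict $G\to(K_n)_r$ — is the same as the paper's, but your recolouring step has a genuine hole. You colour each peeled clique $A_j$ \emph{separately} so that it spans no monochromatic $K_{n-1}$, and colour the edges between distinct $A_j$'s with ``a non-$\gamma$ colour''. Your case analysis then silently assumes that a monochromatic $K_n$ in a non-$\gamma$ colour either lies in $G'$, or meets a single $A_j$ in $\geq n-1$ vertices, or uses only one vertex of $\bigcup A_j$. But there is a fourth case: a $K_n$ in a non-$\gamma$ colour meeting several $A_j$'s in at most $n-2$ vertices each, held together by the between-clique edges (which you have made monochromatic) and by whatever non-$\gamma$ colours happen to agree inside the $A_j$'s. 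Since $t$ can be as large as $r$ and $r(n-2)\geq n$, nothing you have said excludes this. The correct fix is the one the paper uses and that you gesture at only in your final sentence: recolour the \emph{whole} of $G[\bigcup A_j]$ at once with the $r-1$ non-$\gamma$ colours so that it contains no monochromatic $K_n$, which is possible because $\bigl|\bigcup A_j\bigr|\leq rn < R_{r-1}(n,\ldots,n)$ for all $(n,r)\neq(3,3)$ (a short induction from $R_r(n)\geq R_{r-1}(n)+n$). This is not a ``small-parameter verification to be done by hand'' but the engine of the proof; it also disposes of $n=3$, $r\geq 4$ without the vague patch you propose there.

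The second, larger gap is $(n,r)=(3,3)$, where $R_2(3)=6<9$ and the recolouring device fails outright. You appear to believe this case is an exception (``$K_6$ is an obstruction when $n=r=3$''), but it is not: $K_6$ is the obstruction only for $r=2$, and the theorem you are proving asserts equivalence for $(3,3)$. The paper spends half its proof here, using that $\chi(G)\geq R_3(3)=17$ for any $3$-Ramsey graph $G$ for $K_3$; that a $3$-colouring with monochromatic triangles in all three colours must contain a monochromatic $K_3+K_2$ (else $\chi(G)\leq 10$); that a $3$-Ramsey graph for $K_3$ containing a $K_6$ also contains a vertex-disjoint $K_3$; and the Bloom--Liebenau result that $K_6$-free $2$-Ramsey graphs for $K_3$ are $2$-Ramsey for $K_3+K_2$, applied to pairwise unions of colour classes. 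None of this is recoverable from your outline, so as it stands your proof covers at best the cases $n\geq 4$, $r\geq 3$, and even those only after repairing the recolouring as above.
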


\begin{proof}
To begin with, observe that for $n, r\geq 3$ the $(r-1)$-colour Ramsey numbers satisfy
$$R_{r-1}(n, \ldots, n)>rn\indent\text{whenever}\; (n, r)\neq (3, 3);$$
for fixed $n\geq 3$ this follows from $R_r(n)\geq R_{r-1}(n)+n$  by induction on $r$ (this simple inequality is a special case of one observed in~\cite{R}); note that for $n=3$ the induction beginning holds at $r=4$: $R_3(3)=17>4\cdot 3$, while for $n\geq 4$ it holds at $r=3$: $R_2(n)>3n$ (which in turn follows inductively from the relation $R_2(n+1)\geq R_2(n)+n$ and $R_2(4)=18>3\cdot 4$).

Now let $G$ be an $r$-Ramsey graph for $K_n$, where $(n, r)\neq (3, 3)$ and suppose that there is an $r$-edge-colouring of $G$ without a monochromatic $K_n+K_{n-1}$. We fix such a colouring and, adapting a method of~\cite{SZZ}, recolour a part of $G$ so as to obtain a contradiction to $r$-Ramseyness. W.l.o.g. there is a copy of $K_n$ in colour $r$; let $V_r$ denote its vertex set. For every $i\in [r-1]$ let $V_i$ denote the vertex set of a largest $i$-coloured clique in $V\backslash V_r$ of size at most $n$.

Consider the bipartition of $V(G)=A\cup B$, where $A:=V_1\cup\ldots V_{r-1}\cup V_r$ and $B:=V(G)\backslash A$. Note that $\left|A\right|\leq rn$, and also that for $n, r\geq 3$ we have $R_r(n, \ldots, n, 2)=R_{r-1}(n, \ldots, n)>rn$
for $(n, r)\neq (3, 3)$. Hence we can recolour the edges of $G[A]$ using colours $1,\ldots, r-1$ in such a way that there is no monochromatic $K_n$. We further recolour all the edges from $A$ to $B$ with colour $r$. We claim that the resulting $r$-edge-colouring contains no monochromatic $K_n$.

Indeed, if there were such copy in colour $i\in [r-1]$, say, it would need to lie in $G[B]$. But then $\left|G[V_i]\right|=n$, thus contradicting the non-existence of an $i$-coloured $K_n+K_{n-1}$ in the original colouring of $G$. Alternatively, if there were such in colour $r$, it would be using at most one vertex from $A$ (since after our recolouring there are no edges of colour $r$ in $G[A]$). Then, however, $G[B]$ would have to contain a $K_{n-1}$ in colour $r$, resulting in a similar contradiction.\\

It remains to prove the case $(n, r)=(3, 3)$. Towards this purpose we make three further preliminary observations about an arbitrary $3$-colour Ramsey graph $G$ for $K_3$:

1. Note that the chromatic number of $G$ satisfies $\chi(G)\geq R_3(3)=17$. This is dealt with by a simple argument due to Chvatal, which works equally well in the multicolour setting.

2. If a $3$-edge-colouring of $G$ contains a monochromatic triangle in every colour, then it contains a monochromatic copy of $K_3+K_2$: Let $V_0$ be the set of the vertices belonging to the three monochromatic triangles. Note that $G[V(G)\backslash V_0]$ contains an edge (otherwise $\chi(G)\leq\chi (G[V_0])+1\leq 10$, thus contradicting observation $1$). This edge must then form a monochromatic $K_3+K_2$ along with one of the three monochromatic triangles.

3. If $G$ (uncoloured) contains a copy of $K_6$, say $K$, then $G$ contains a further vertex-disjoint copy of $K_3$: Otherwise $G$ would vertex-decompose into $K$ and a triangle-free subgraph $F$ and we show that this is in fact impossible by giving $G$ a $3$-edge-colouring without a monochromatic triangle: let $v\in V(K)$ and edge-colour $K-v$ red-blue without monochromatic triangles (i.e. into a red and a blue $C_5$). Colour the remaining star of $K$ yellow and also colour $F$ blue. Finally, colour all edges between $K-v$ and $F$ yellow and all those between $v$ and $F$ red.

We are now able to provide the proof for $(n, r)=(3, 3)$. Fix a red-blue-yellow edge-colouring of a $3$-Ramsey graph $G$ for $K_3$ and let $R, B, Y$ denote the colour classes on $V(G)$, regarded as uncoloured subgraphs. We aim to find a monochromatic $K_3+K_2$ in $G$.

Suppose that \emph{none} of the subgraphs of $G$ formed by the union of any two of $R, B, Y$ is a $2$-Ramsey graph for $K_3$. Then the subgraph $R\cup B$ can be recoloured red-blue without monochromatic $K_3$'s. Hence there must exist a yellow $K_3$ in $Y\subset G$. Similarly we show that there is also both a blue and a red copy of $K_3$ in $G$. We are then done by observation 2.

Suppose that wlog. $R\cup B$ is a $2$-Ramsey graph for $K_3$. If $R\cup B$ does not contain a $K_6$, then it is a $2$-Ramsey graph for $K_3+K_2$ (see e.g.~\cite{BL}) and hence admits a monochromatic $K_3+K_2$. Otherwise, if $R\cup B$ does contain a copy of $K_6$, then, unless it contains a monochromatic $K_3+K_2$, that copy $K$ of $K_6$ contains both a red and a blue $K_3$. By observation $3$, $G$ then also contains a copy of $K_3$ that is vertex-disjoint from $K$. Then, if one of the edges of that $K_3$ is red or blue, a monochromatic $K_3+K_2$ is found; if not, then triangles of all three colours have been found, so we are done as before. This completes the proof.
\end{proof}

\emph{Concluding Remarks.} As the example of $K_3$ and $K_3+K_2$ shows, $3$-equivalence does not imply $2$-equivalence. This observation suggests the following two questions.

\begin{ques}
Are any two connected $3$-equivalent graphs necessarily $2$-equivalent?
\end{ques}

\begin{ques}
Are any two $2$-equivalent graphs necessarily $3$-equivalent?
\end{ques}

Note that an affirmative answer to the second question would guarantee that $2$-equivalence automatically forces $r$-equivalence for all $r\geq 3$; this follows inductively as in any colouring of an $r$-Ramsey graph with $r\geq 4$ either the graph formed by colour classes 1, 2 is $2$-Ramsey or the graph formed by colour classes $3,\ldots, r$ is $(r-2)$-Ramsey. The variant to look for multicoloured (instead of monochromatic) subgraphs in coloured graphs was considered in~\cite{HR}.

\bibliographystyle{abbrv}
\bibliography{requiv}

\begin{thebibliography}{1}

\bibitem{BL}
T.~F. Bloom and A.~Liebenau.
\newblock Ramsey-equivalence of ${K}_n$ and ${K}_n+{K}_{n-1}$.
\newblock 2015.

\bibitem{HR}
A.~Haupt and D.~Reding.
\newblock A natural generalization in graph {R}amsey theory.
\newblock 2017.

\bibitem{R}
A.~Robertson.
\newblock New lower bound formulas for multicolored {R}amsey numbers.
\newblock {\em Electron. J. Combin.}, 9(1):Research Paper 13, 6, 2002.

\bibitem{SZZ}
T.~Szab\'o, P.~Zumstein, and S.~Z\"urcher.
\newblock On the minimum degree of minimal {R}amsey graphs.
\newblock {\em J. Graph Theory}, 64(2):150--164, 2010.

\end{thebibliography}

\end{document}